\theoremstyle{plain}
\newtheorem{theorem}{Theorem}[section]
\newtheorem{definition}[theorem]{Definition}
\newtheorem{lemma}[theorem]{Lemma}
\newtheorem{corollary}[theorem]{Corollary}
\newtheorem{proposition}[theorem]{Proposition}
\newtheorem{example}{Example}[section]
\newtheorem*{remark}{Remark}
\newcommand{\cy}[1]{\mathbb{Z}_{#1}}
\newcommand{\F}{\mathbb F}
\newcommand{\C}{\mathbb C}
\newcommand{\Z}{\mathbb Z}
\newcommand{\PG}{\mathrm{PG}}
\newcommand{\RN}[1]{%
  \textup{\uppercase\expandafter{\romannumeral#1}}%
}
 \def\zhou#1 {\fbox {\footnote {\ }}\ \footnotetext { From Yue: {\color{red}#1}}}
 \def\alex#1 {\fbox {\footnote {\ }}\ \footnotetext { From Alex: {\color{blue}#1}}}
\begin{document}
	\title{Cayley graphs of diameter two \linebreak from difference sets}
	\author[Alexander Pott]{Alexander Pott}
	\address{Faculty of Mathematics, Otto-von-Guericke University, Universitaetsplatz 2, D-39106 Magdeburg, Germany}
	\email{alexander.pott@ovgu.de}
	\author[Yue Zhou]{Yue Zhou}
	\address{Dipartimento di Mathematica e Applicazioni ``R. Caccioppoli", Universit\`{a} degli Studi di Napoli ``Federico \RN{2}", I-80126 Napoli, Italy}
	\email{yue.zhou.ovgu@gmail.com}
	\keywords{Cayley graph; Degree-diameter problem; Group}
	
	\maketitle
	
	\begin{abstract}
		Let $C(d,k)$ and $AC(d,k)$ be the largest order of a Cayley graph and a Cayley graph based on an abelian group, respectively, of degree $d$ and diameter $k$. When $k=2$, it is well-known that $C(d,2)\le d^2+1$ with equality if and only if the graph is a Moore graph. In the abelian case, we have $AC(d,2)\le \frac{d^2}{2}+d+1$. The best currently lower bound on $AC(d,2)$ is $\frac{3}{8}d^2-1.45 d^{1.525}$ for all sufficiently large $d$. In this paper, we consider the construction of large graphs of diameter $2$ using generalized difference sets. We show that $AC(d,2)\ge \frac{25}{64}d^2-2.1 d^{1.525}$ for sufficiently large $d$ and $AC(d,2) \ge \frac{4}{9}d^2$ if $d=3q$, $q=2^m$ and $m$ is odd. 
	\end{abstract}
		
\section{Introduction}\label{se:intro}
In a graph $\Gamma$, the \emph{distance} $d(u,v)$ from vertex $u$ to vertex $v$ is the length of a shortest $u$-$v$ path in $\Gamma$. The largest distance between two vertices in $\Gamma$ is the \emph{diameter} of $\Gamma$.  Let $\Gamma=(V,E)$ be a graph of maximum degree $d$ and diameter $k$. According to the Moore bound, $\Gamma$ has at most $M_{d,k}$ vertices, where
\[M_{d,k}=\left\{
  \begin{array}{ll}
    1+d \frac{(d-1)^k-1}{d-2}, & \hbox{if $d>2$;} \\
    2k+1, & \hbox{if $d=2$.}
  \end{array}
\right.
\]
When the order of $V$ equals $M_{d,k}$, the graph $\Gamma$ is called a \emph{Moore graph}. Clearly complete graphs ($k=1$) and cycles of odd order ($d=2$) are Moore graphs. 

The study of Moore graphs began with the work of Hoffman and Singleton \cite{hoffman_moore_1960}. It is not difficult to see that a Moore graph of diameter $k$ is always regular and its girth, namely the length of the shortest cycle contained in it, is $2k+1$. Furthermore, it can be shown that a Moore graph is distance regular. The Hoffman–-Singleton theorem states that any Moore graph with diameter $2$ must have valency $2$, $3$, $7$ or perhaps also $57$. The graphs corresponding to the first three valencies are the cycle of order $5$, the Petersen graph and the Hoffman--Singleton graph. The existence of a Moore graph with valency $57$ is still open. As proved by Damerell \cite{damerell_moore_1973} as well as Bannai and Ito \cite{bannai_1973_moore} independently, there are no other Moore graphs; see \cite[Section 23]{biggs_algebraic_1993} too.

As there are very few Moore graphs, it is interesting to ask the following so-called \emph{Degree/Diameter} problem. 
\begin{itemize}
	\item Given positive integers $d$ and $k$, find the largest possible number $N(d,k)$ of vertices in a graph with maximum degree $d$ and diameter $k$.
\end{itemize}
Since this is still quite a difficult problem, the following two problems have been investigated.
\begin{itemize}
	\item Find good upper bounds for $N(d,k)$ by proving nonexistence of graphs.
	\item Construct large graphs to increase the lower bounds for $N(d,k)$.
\end{itemize}
We refer to \cite{miller_moore_2013} for a recent survey on the Degree/Diameter problem.

By far, the best lower bounds for $N(d,2)$ follow from a construction  by Brown \cite{brown_graphs_1966}. The vertices of his graph are the set of points of $\PG(2,q)$, where $q$ is a prime power. Two different points $(a,b,c)$ and $(x,y,z)$ are adjacent if and only if $ax+by+cz=0$. This graph has $q^2+q+1$ vertices, it is not regular and its maximum degree $d=q+1$. Therefore $N(d,2)\ge d^2-d+1$ for $d=q+1$. By extending the Brown's graphs appropriately, we can get rid of the strong restriction on $d$ and show that $N(d,2)\ge d^2-2d^{1.525}$ for sufficiently large integer $d$; see \cite{siran_large_2011}. Clearly, this bound asymptotically approaches the Moore one.

Let $G$ be a group and $S\subseteq G$ such that $S^{-1}=S$ and $e\notin S$. Here $S^{-1}:= \{s^{-1}: s\in S\}$. The \emph{Cayley graph} $\Gamma(G,S)$ has a vertex set $G$, and two distinct vertices $g$, $h$ are adjacent if and only if $g^{-1}h\in S$. Here $S$ is called the \emph{generating set}. A Cayley graph is always vertex-transitive and regular, and its valency equals $\#S$. The following proposition gives us a strategy to construct Cayley graphs of diameter $k$.
\begin{proposition}\label{prop:cayley}
	The diameter of a Cayley graph $\Gamma(G,S)$ is $k$ if and only if $k$ is the smallest integer such that all elements in $G$ appear in  $\{\prod_{i=1}^k s_i: s_i\in S \text{ for } i=1,2,\dots,k\}$.
\end{proposition}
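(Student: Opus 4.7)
The plan is to translate the metric question about $\Gamma(G,S)$ into an algebraic question about products in $G$, using the left regular action. First I would observe that left multiplication by any $x\in G$ is an automorphism of $\Gamma(G,S)$: the edge condition $g^{-1}h\in S$ is invariant under $(g,h)\mapsto(xg,xh)$. This gives vertex-transitivity, so $d(g,h)=d(e,g^{-1}h)$, and therefore the diameter equals $\max_{g\in G}d(e,g)$.

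Next I would prove that $d(e,g)$ is exactly the smallest $\ell$ for which $g=s_1s_2\cdots s_\ell$ with each $s_i\in S$. Starting from a walk $e=v_0,v_1,\ldots,v_\ell=g$, the adjacency condition forces $s_i:=v_{i-1}^{-1}v_i\in S$, and telescoping yields $s_1s_2\cdots s_\ell=v_\ell=g$. Conversely, given such a factorization, the partial products $v_i:=s_1\cdots s_i$ form a walk from $e$ to $g$ of length $\ell$, since $v_{i-1}^{-1}v_i=s_i\in S$. This is a length-preserving bijection between walks and $S$-word representations of $g$.

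Combining these two steps, the diameter of $\Gamma(G,S)$ is the smallest $k$ such that every $g\in G$ is a product of at most $k$ elements of $S$. The only delicate point is reconciling this ``at most $k$'' condition with the statement's ``product of exactly $k$ elements'': here I would use that $S=S^{-1}$ is nonempty, so any shorter product $s_1\cdots s_j$ can be extended by appending pairs $s\,s^{-1}$, and once $k\ge 2$ and the relevant parity is met this realizes every element as a product of exactly $k$ factors drawn from $S$. That parity bookkeeping, rather than the walk/word correspondence itself, is the only place where care is needed; the core of the argument is simply the observation that walks from $e$ in a Cayley graph are in length-preserving bijection with $S$-words in $G$.
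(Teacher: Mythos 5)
Your walk--word correspondence and the reduction via vertex-transitivity to $d(e,g)$ are correct, and they establish the right intermediate fact: the diameter is the smallest $k$ such that every $g\in G$ is a product of \emph{at most} $k$ elements of $S$ (the empty product giving $e$). The paper states the proposition without proof, so the only question is whether your bridge from ``at most $k$'' to the literal statement, which asks for products of \emph{exactly} $k$ factors, goes through. It does not: the ``parity bookkeeping'' you defer is not bookkeeping but the precise point at which the literal statement fails. Appending a pair $s\,s^{-1}$ changes the word length by $2$, so it converts a length-$j$ representation into a length-$k$ one only when $k\equiv j\pmod 2$, and in general the parity cannot be repaired. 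Concretely, take $G=\cy{4}$ and $S=\{1,3\}$: the Cayley graph is the $4$-cycle, of diameter $2$, yet $S\cdot S=\{0,2\}\neq G$; indeed $1$ lies in no $S^k$ with $k$ even and $0$ in no $S^k$ with $k$ odd, so \emph{no} $k$ satisfies the displayed condition. In the other direction, $G=\cy{5}$ with $S=G\setminus\{0\}$ gives $K_5$, of diameter $1$, while the smallest $k$ with $G\subseteq S^k$ is $2$.

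So the proposition must be read (as it is in fact used later) with ``at most $k$ factors,'' i.e.\ $k$ is minimal with $G=\{e\}\cup S\cup S^2\cup\dots\cup S^k$; your argument proves exactly that version, and an honest writeup should either state it that way or add the hypothesis that makes the two versions agree. The one consequence the paper actually invokes --- if $G\preceq S\cdot S$ and $S\cup\{e\}\neq G$, then the diameter is exactly $2$ --- does follow from the part of your argument that is complete: $g\in S\cdot S$ gives a walk of length $2$ from $e$ to $g$, so the diameter is at most $2$, and any element outside $S\cup\{e\}$ forces it to be at least $2$. To salvage the ``exactly $k$'' form even for $k=2$ one needs the additional observation that each $s\in S$ itself lies in $S\cdot S$; this holds for the generating sets constructed in this paper (where $G\preceq S\cdot S$ is verified outright) but not for Cayley graphs in general, as the $4$-cycle shows.
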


Cayley graphs have been extensively used in searching for lower bounds for $N(d,k)$. Actually several largest known graphs are Cayley graphs; see \cite{online_table_graphs}. By Proposition \ref{prop:cayley}, to construct Cayley graphs of diameter $2$, we need to find a subset $S\subsetneq G$ such that $G=\{s_1s_2: s_1,s_2\in S\}$. Let us use $C(d,k)$ to denote the largest order of Cayley graphs of valency $d$ and diameter $k$. By Proposition \ref{prop:cayley}, it is not difficult to see that $C(d,2)\le d(d-1)+d+1=d^2+1$, which coincides with the Moore bound.
Up to now, for $k=2$ the best result is obtained by \v{S}iagiov\'{a} and \v{S}ir\'{a}\v{n} in \cite{siagiova_approaching_2012}, in which it is proved that in a family of nonabelian groups there exist Cayley graphs of degree $d$, diameter $2$ and order larger than $d^2-6\sqrt{2}d^{3/2}$ for infinitely many $d$.  Hence this lower bound for Cayley graphs asymptotically approaches the Moore bound $d^2+1$ for all the graphs of diameter 2.

We use $AC(d,k)$ and $CC(d,k)$ to denote the largest order of Carley graphs of valency $d$ and diameter $k$ based on abelian groups and cyclic groups respectively. By simple counting argument, we see that $AC(d,2)\le d^2/2+d+1$. More general results on $AC(d,k)$ are obtained by Dougherty and Faber \cite{dougherty_degree-diameter_2004} thorough investigation of lattice coverings. For instance, they show that $AC(2\delta, k) \le \sum_{i=0}^{\delta}2^i\binom{\delta}{i}\binom{k}{i}$.
Their approach works well in finding lower bounds for $AC(d,k)$ for small even values of $d$. It is proved that $AC(4,k)=2k^2+2k+1$ which reaches the above upper bound, and there is also a nice construction for $d=6$. On the other hand, for large $d$ and small $k$, the best general result  up to now is obtained by Macbeth, \v{S}iagiov\'{a} and \v{S}ir\'{a}\v{n} in \cite{macbeth_cayley_2012}:
\begin{equation}\label{eq:old_bound_AC}
	AC(d,2)>\frac{3}{8} d^2-4,
\end{equation}
where $d=4q-2$ for an odd prime power $q$. As Baker, Harman and Pintz proved in \cite{baker_difference_2001} that there is always a prime $p$ such that $p\in [x-x^{0.525},x ]$ for sufficiently large $x$, we may extend \eqref{eq:old_bound_AC} to all sufficiently large integers $d$, by simply adding more elements into the corresponding generating set; see \cite{siran_large_2011}. Based on a similar approach, in \cite{vetrik_abelian_2013} Vetr\'{i}k showed that
$CC(d,2)\ge \frac{13}{36}(d+2)(d-4)$ for any $d=6p-2$ where $p$ is a prime such that $p\neq 13$ and $p\not\equiv1 \pmod{13}$. 

Relative difference sets and direct product difference sets from finite Desarguesian planes play important roles in the construction of large Cayley graphs of diameter $2$ in \cite{macbeth_cayley_2012,vetrik_abelian_2013,siagiova_approaching_2012}. In Section \ref{sec:pre}, we give a short introduction to those generalized difference sets derived from finite projective planes. In Section \ref{sec:main}, we summarize the known approaches on constructing large Cayley graphs with diameter 2 based on abelian groups and present two results to improve the lower bound for $AC(d,2)$ for infinitely many $d$.

\section{Preliminaries}\label{sec:pre}
Let $G$ a group of order $v$ with the identity element $e$, and let $D$ be a $k$-subset of $G$. Then $D$ is called a \emph{$(v,k,\lambda)$-difference set} if the list of differences $d_1d_2^{-1}$ with $d_1,d_2\in D$, $d_1\neq d_2$, covers all elements in $G\setminus\{e\}$ exactly $\lambda$ times. There are various generalizations of difference sets, such as partial difference sets, relative difference sets, etc.; see \cite[Chapter 6]{beth_design_1999} and \cite{pott_finite_1995}. In this paper, we need the following general concept.

\begin{definition}\label{de:GneralizedDS}
    Let $G$ be a group of order $v$ and $N_1$, $\cdots$, $N_r$ subgroups of order $n_1,\dots,n_r$. Assume that $N_1$, $\cdots$, $N_r$ intersect pairwise trivially. A $(v; n_1,\dots, n_r;\allowbreak k, \lambda; \lambda_1,\dots,\lambda_r)$-\emph{generalized difference set}\index{difference set!generalized difference set (GDS)} (abbreviated to GDS) relative to the subgroups $N_i$ is a $k$-subset $D$ of $G$ such that the list of differences $d_1 d_2^{-1}$ with $d_1,d_2\in D$, $d_1\neq d_2$, covers all the elements in $G\backslash (N_1\cup N_2\cup \cdots \cup N_r)$ exactly $\lambda$ times, and the nonzero elements in $N_i$ exactly $\lambda_i$ times. The subgroups $N_i$ are called the \emph{exceptional} subgroups. 
    A generalized difference set $D$ is called \emph{cyclic} or \emph{abelian} if $G$ has the respective property.

    Furthermore, if $r=1$, $\lambda_1=0$ and $v=mn$ where $n:=n_1$, then we call $D$ a \emph{relative difference set} with parameters $(m,n,k,\lambda)$ (an $(m,n,k,\lambda)$-RDS for short), and we call $N_1$ the \emph{forbidden subgroup}. If $N_1$ is a direct factor of $G$, the RDS is called \emph{splitting}.
\end{definition}

\begin{example}
    Let $\cy{n}$ denote the cyclic group of order $n$.
    \begin{enumerate}
        \item The set $\{\,1,2,4\,\}\subseteq \cy{7}$ is a $(7,3,1)$-difference set.
        \item The set $\{\,0,1\,\}\subseteq \cy{4}$ is a $(2,2,2,1)$-RDS relative to the forbidden subgroup $\{0,2\}$.
        \item The set $\{\,(1,2),(2,0),(0,3)\,\}$ is a $(16;4,4,4;3,1;0,0,0)$-GDS in $\cy{4}\times \cy{4}$ relative to the three exceptional subgroups $\cy{4}\times\{\,0\,\}$, $\{\,0\,\}\times \cy{4}$ and $\{\,(x,x):x\in\cy{4}\,\}$. \qed
    \end{enumerate}
\end{example}

Let $\C[G]$ denote the set of formal sums $\sum_{g\in G} a_g g$, where $a_g\in \C$ and $G$ is any (not necessarily abelian) group which we write here multiplicatively. We use ``$1$" to denote the identity element of $G$. The set $\C[G]$ is basically just a complex vector space whose basis is the set of group elements. We add these vectors componentwise, i.e.
$$\sum_{g\in G} a_g g +\sum_{g\in G} b_g g :=\sum_{g\in G} (a_g+b_g) g,$$
and we define a multiplication
$$(\sum_{g\in G} a_g g )\cdot (\sum_{g\in G} b_g g) :=\sum_{g\in G} (\sum_{h\in G} a_hb_{h^{-1}g})\cdot g.$$
Moreover,
$$\lambda \cdot(\sum_{g\in G} a_g g ):= \sum_{g\in G} (\lambda a_g) g $$
for $\lambda\in\C$.

For $D=\sum_{g\in G} a_g g$ and $E=\sum_{g\in G} b_g g$ in $\C[G]$, if $a_g$ and $b_g$ are all integers and $a_g\le b_g$ for each $g\in G$, then we write $D\preceq E$.

If $D=\sum_{g\in G} a_g g$, we define 
$$D^{(t)}:=\sum_{g\in G} a_g g^t.$$
An important case is $D^{(-1)}=\sum_{g\in G} a_g g^{-1}$. If $D$ is a subset of $G$, we identify $D$ with the group ring element $\sum_{g\in D} d$. The following result is straightforward.
\begin{lemma}\label{le:GDS_groupring}
    The set $D$ is a $(v; n_1,\dots,n_r;k,\lambda;\lambda_1,\dots,\lambda_r)$-GDS relative to the subgroups $N_i$ if and only if
    \begin{align}
        \label{eq:GDS_groupring}D\cdot D^{(-1)} =& k-(\lambda(1-r)+\lambda_1+\dots+\lambda_r)+\\
        \nonumber &\lambda(G-N_1-N_2-\dots-N_r)+\lambda_1N_1+\dots+\lambda_rN_r.\qed
    \end{align}
\end{lemma}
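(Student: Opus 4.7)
The plan is to verify the equality in $\C[G]$ by comparing the coefficient of each group element on the two sides. Expand the left-hand side as
\[
D\cdot D^{(-1)} \;=\; \sum_{d_1,d_2\in D} d_1 d_2^{-1},
\]
so that the coefficient of $g\in G$ in $D\cdot D^{(-1)}$ is precisely the number of ordered pairs $(d_1,d_2)\in D\times D$ with $d_1 d_2^{-1}=g$. Using the hypothesis that the exceptional subgroups $N_1,\dots,N_r$ pairwise intersect trivially, every non-identity element of $G$ lies in at most one $N_i$, so I can partition $G$ into three disjoint pieces: $\{1\}$, $N_i\setminus\{1\}$ for $i=1,\dots,r$, and $G\setminus(N_1\cup\cdots\cup N_r)$.

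Next I would read off the coefficient of $g$ on each side in each of these three cases. For the left-hand side, the pair-counting is immediate: the coefficient of $1$ is $|D|=k$ (obtained only from $d_1=d_2$); the coefficient of a nonidentity $g\in N_i$ is the number of difference representations of $g$ from $D$, which is $\lambda_i$ by definition of a GDS; and the coefficient of $g\notin\{1\}\cup\bigcup_i N_i$ is $\lambda$ likewise by definition. For the right-hand side the coefficients can be tabulated directly from the expression: the $\lambda(G-N_1-\cdots-N_r)$ term contributes $\lambda(1-r)$, $0$, and $\lambda$ in the three cases respectively; the $\lambda_i N_i$ terms together contribute $\lambda_1+\cdots+\lambda_r$, $\lambda_i$, and $0$; and the scalar $k-(\lambda(1-r)+\lambda_1+\cdots+\lambda_r)$ contributes only at $g=1$. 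Summing shows that the right-hand side has coefficient $k$, $\lambda_i$, $\lambda$ in the three respective cases, matching the left-hand side exactly.

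Since the coefficient-by-coefficient comparison is in fact an equivalence, both directions of the ``if and only if'' follow from the same computation: assuming the GDS property yields \eqref{eq:GDS_groupring}, and conversely assuming \eqref{eq:GDS_groupring} recovers the correct multiplicities of differences. I do not expect any real obstacles; the only delicate point is verifying that the constant term is chosen so that the identity element also has the correct coefficient, and this uses the fact that $1\in N_j$ for every $j$, which forces the $(1-r)$ correction inside the scalar. The pairwise-trivial intersection hypothesis on the $N_i$ is used precisely so that each nontrivial $g$ is assigned an unambiguous multiplicity $\lambda_i$ or $\lambda$.
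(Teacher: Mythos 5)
Your proof is correct: the coefficient-by-coefficient comparison over the three cases ($g=1$, $g\in N_i\setminus\{1\}$, $g\notin\bigcup_i N_i$) is exactly the standard argument, and the paper itself omits the proof, labelling the lemma ``straightforward'' with precisely this computation in mind. Your handling of the constant term at the identity (using $1\in N_j$ for all $j$ to explain the $\lambda(1-r)$ correction) and of the pairwise-trivial intersections is accurate, and the observation that the comparison works in both directions settles the equivalence.
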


In \cite{dembowski_quasiregular_1967}, Dembowski and Piper have classified finite projective planes with large abelian collineation groups into eight cases. Several cases of them have close connections to generalized difference sets with $\lambda=1$. We refer to \cite{hughes_projective_1973} for an introduction of projective planes. Let $n$ and $G$ be the order of the corresponding projective plane and collineation group respectively. We summarize these (generalized) difference sets $D$ and the corresponding group ring equations \eqref{eq:GDS_groupring} in the following, which can be found in \cite{ganley_direct_1977,ganley_quasiregular_1975,ganley_relative_1975,ghinelli_finite_2003}.
\begin{enumerate}[label=(\Roman*)]
	\item\label{it:1} \textbf{Planar difference set}: Here $D$ is an $(n^2+n+1, n+1,1)$-difference set and equivalently
	\[D\cdot D^{(-1)} = n + G.\]
	\item\label{it:2} \textbf{Relative difference set}: Here $D$ is an $(n,n,n,1)$-RDS with a forbidden subgroup $N$ of order $n$ and equivalently
	\[D\cdot D^{(-1)} = n + G - N.\]
	Furthermore $D\cdot N = D^{(-1)}\cdot N =G$.
	\item\label{it:3} \textbf{Affine difference set}: Here $D$ is an $(n+1,n-1,n,1)$-RDS with a forbidden subgroup $N$ of order $n-1$ and equivalently
	\[D\cdot D^{(-1)} = n + G - N.\]
	Furthermore, $D\cdot N = D^{(-1)}\cdot N =G-M$,	where $M$ corresponds to a $(n-1)$-subset of $G$.
	\item\label{it:4} \textbf{Direct product difference set}:  Here $D$ is an $(n(n-1); n,n-1;n-1,1;0,0)$-GDS relative to subgroups $N_1$ and $N_2$ of orders $n$ and $n-1$, equivalently
	\[D\cdot D^{(-1)}= n + G -N_1 -N_2.\]
	Furthermore, $D \cdot N_1 = D^{(-1)} \cdot N_1= G$, $D \cdot N_2 = D^{(-1)} \cdot N_2= G - N_2$ and $N_1\cdot N_2=G$.
	\item\label{it:5} \textbf{Neofield}: Here $D$ is an $((n-1)^2;n-1,n-1,n-1;n-2,1;0,0,0)$-GDS relative to three subgroups $N_1$, $N_2$ and $N_3$, all of which are of order $n-1$ and intersect pairwise trivially. In group ring $\C[G]$, it can be equivalently written as
	\[D \cdot D^{(-1)} = n + G - N_1-N_2-N_3.\]
	Furthermore, $D\cdot N_i =D^{(-1)} \cdot N_i=G-M_i$ where $M_i$ are certain subsets of size $n-1$ in $G$ and $N_i\cdot N_j=G$ for $i,j=1,2,3$ and $i\neq j$.
\end{enumerate}

All known examples of these (generalized) difference sets come from projective planes, which are not necessarily desarguesian. Actually, there are generalized difference sets of types \ref{it:2}, \ref{it:3}, \ref{it:4} and \ref{it:5} contained in nonabelian groups; see \cite{jungnickel_automorphism_1982} for \ref{it:2} derived from non-commutative semifields, \cite{ganley_relative_1975} for \ref{it:3} from non-abelian collineation groups of the Desarguesian planes and \cite{hiramine_difference_1999} for \ref{it:4} and \ref{it:5} from nearfields.

\section{Main results}\label{sec:main}
In \cite{macbeth_cayley_2012,vetrik_abelian_2013}, the $(q,q,q,1)$-relative difference set
\[\{(x,x^2): x\in \F_q \}\subseteq (\F_q,+)\times (\F_q,+), \quad q \text{ odd}\]
and the direct product difference set
\[\{(x,x): x\in \F_q^*\} \subseteq (\F_q,+) \times (\F^*_q,*) \]
are used to construct large Cayley graphs of diameter $2$. A similar approach is applied in \cite{siagiova_approaching_2012} using a direct product difference set in a nonabelian group with many involutions.

The main idea of all these approach can be described as follows. Let $G$ be a group and $D\subseteq G$ one of the generalized difference sets listed above. There are $k$ subgroups $N_i$, $i=1,\dots,k$, where $k\le 3$. We know that the order of $G$ is approximately $n^2$ and the orders of $D$ and subgroups $N_i$ are approximately $n$. Let $H$ be an additively written abelian group and we consider $G\times H$. For $A\subseteq G$ and $h\in H$, let $(A,h)$ denote $\{(a,h): a \in A\}$. Let $\Psi$, $\Lambda_i$ ($i\le k$) be subsets of $H$. Viewed as an element in the group ring, we define $S\subseteq G\times H$ as
\[ S := \sum_{g \in \Psi}(D,g) + \sum_{g \in \Psi}(D^{(-1)},-g) + \sum_{i=1}^k\sum_{h\in \Lambda_i}(N_i, h)+ \Upsilon,\]
where $\Upsilon\subseteq G\times H$ is of small size compared with $n$. Furthermore, to construct undirected Cayley graphs, we want $S$ to be symmetric, i.e.\ viewed as an element in $\C[G]$, $S=S^{(-1)}$. Here it means that $\Lambda_i=\Lambda_i^{(-1)}$ and $\Upsilon=\Upsilon^{(-1)}$. There could be an overlapping between $(D,g)$ and $(D^{(-1)},-g)$ when $g=-g$. However, from the difference set property, it follows that there are at most 2 elements in the intersection of $D$ and $D^{(-1)}$.  Let $\psi = \#\Psi$ and $\theta_i=\#\Lambda_i$ for $1\le i\le k$. From the above analysis, we deduce that the size of $S$ is approximately $(2\psi+\sum_i\theta_i) n$.

Let us look at the elements in $S \cdot S$. There are several types of them:
\begin{itemize}
	\item $(D D^{(-1)},g_1-g_2)=(n+G-\sum N_i,g_1-g_2)$,
	\item $(D\cdot N_i, g+ h_i)=(G-M, g+h_i)$ and $(D^{(-1)}\cdot N_i, -g+ h_i)=(G-M, -g+h_i)$, where $M$ is of size $n-1$ or $0$ depending on $D$ and $i$. To be precise, for (\RN{1}), (\RN{2}) and $D\cdot N_1$ in (\RN{4}), $\#M=n-1$; for (\RN{3}), $D\cdot N_2$ in (\RN{4}) and (\RN{5}), $\#M=0$.
	\item $(N_i\cdot N_j, h_i+h_j)=(G,h_i+h_j)$ where $i\neq j$. 
	\item $(N_i \cdot N_i, h_i+h_i)= \#N_i (N_i, 2 h_i)$,
	\item $(D\cdot D, j_1+j_2)$ and $(D^{(-1)} \cdot D^{(-1)}, -j_1-j_2)$.
\end{itemize}
Let us look at the first component of these sets. For the first three ones, we see that almost every element in $G$ appears, which does not hold for the last two cases.

We want to use $S$ to define a Cayley graph of diameter $2$. By Proposition \ref{prop:cayley}, we have to show that every element in $G\times H$ can be written as $s_1 s_2$ where $s_1,s_2\in S$, i.e. \[G\times H \preceq S\cdot S \text{\quad in } \Z[G\times H].\]
Hence one strategy is to choose $\Psi$ and $\Lambda_i$ as small as possible such that 
\begin{equation}\label{eq:difference_cover_H}
	H \preceq \Psi \cdot \Psi^{(-1)} + \sum_{i=1}^k\Psi \Lambda_i + \sum_{i=1}^k\Psi^{(-1)} \Lambda_i + \sum_{i\neq j} \Lambda_i\Lambda_j\in \Z[H].
\end{equation}
Then we will see that most of the elements in $G\times H$ appear in the set of differences. For those exceptions, we may choose $\Upsilon$ carefully to generate more differences to cover them.
\begin{example}\cite[Theorem 2]{macbeth_cayley_2012}
	Let $D$ be a direct product difference set in $G=(\F_q,+) \times (\F^*_q,*)$ and $H:=\cy{6}$. Now the exceptional subgroups are $N_1=(\F_q,+) \times \{1\}$ and $N_2=\{0\} \times (\F^*_q,*)$. Let $\Psi := \{1\}$, $\Lambda_1=\{0\}$ and $\Lambda_2=\{3\}$. It is easy to check that \eqref{eq:difference_cover_H} holds. By choosing $\Upsilon=\{(0,1,1), (0,1,-1)\}$, it is routine to verify that $S\cdot S$ covers all the elements in $G\times H$.
\end{example}

Next we are going to present two constructions of Cayley graphs which improve the lower bound for $AC(d,2)$. 
\subsection{Construction \RN{1}}	
The first construction is based on neofields. Up to equivalence, the unique known $((n-1)^2;n-1,n-1,n-1;n-2,1;0,0,0)$-GDS in abelian groups exists in $(\F_q^*, *)\times(\F_q^*, *)$, where $n=q$. The three exceptional subgroups $N_1$, $N_2$ and $N_3$ are $\F_q^*\times \{1\}$, $\{1\}\times\F_q^*$ and $\{(x,x): x\in\F_q^* \}$ respectively. The generalized difference set is 
\[D:=\{(x,1-x): x \in \F_q, x\neq 0,1 \}.\]
It is straightforward to check that
\begin{eqnarray}
	\label{eq:neo:DN12} D \cdot N_i = D^{(-1)} \cdot N_i = G-N_i, \qquad\text{for }i =1,2& \text{and}\\
	\label{eq:neo:DN3} D\cdot N_3 = D^{(-1)} \cdot N_3 = G-\{(x,-x): x\in \F_q^*\}.&
\end{eqnarray}
Clearly $q$ is even if and only if $D\cdot N_3$ equals $G-N_3$.
\begin{theorem}\label{th:25/64}
	Let $q$ be a prime power and $d=\left\{
		  \begin{array}{ll}
		    8q-6, & \hbox{$q$ is even;} \\
		    8q-4, & \hbox{otherwise.}
		  \end{array}
		\right.$ Then
		\vspace*{2mm}
	\begin{equation}\label{eq:neo:25/64}
		AC(d,2)\ge \left\{ \def\arraystretch{2.2}
		  \begin{array}{ll}
		   \dfrac{25}{64}(d-2)^2, & \hbox{$q$ is even;} \\
		   \dfrac{25}{64}(d-4)^2, & \hbox{otherwise.}
		  \end{array}
		\right.
	\end{equation}
\end{theorem}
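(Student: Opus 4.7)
The plan is to apply the framework of Section~\ref{sec:main} with the neofield GDS $D=\{(x,1-x):x\in\F_q\setminus\{0,1\}\}\subset G:=(\F_q^*,*)\times(\F_q^*,*)$ together with its three exceptional subgroups $N_1,N_2,N_3$, and take $H$ to be an abelian group of order $25$, say $H=\cy{5}\times\cy{5}$. This makes $|G\times H|=25(q-1)^2$, so if the degree can be arranged to equal $d=8q-6$ (resp.\ $8q-4$), the substitutions $q-1=(d-2)/8$ (resp.\ $q-1=(d-4)/8$) produce the bounds $\frac{25}{64}(d-2)^2$ (resp.\ $\frac{25}{64}(d-4)^2$).

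To hit degree $8(q-1)$ asymptotically I would set $\psi=1$ and $|\Lambda_1|=|\Lambda_2|=|\Lambda_3|=2$, so that $2\psi+\theta_1+\theta_2+\theta_3=8$. Together with a symmetric correction set $\Upsilon\subset G\times H$ of size $4$ in the even case and $6$ in the odd case, the generating set $S$ has size $|S|=2(q-2)+6(q-1)+|\Upsilon|$, equal to $8q-6$ or $8q-4$ as required. The Cayley graph $\Gamma(G\times H,S)$ then has diameter~$2$ by Proposition~\ref{prop:cayley} once we establish $G\times H\preceq S\cdot S$ in $\Z[G\times H]$.

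Expanding $S\cdot S$ via the ten product types listed in the excerpt and applying \eqref{eq:neo:DN12} and \eqref{eq:neo:DN3}, the set of $g\in G$ with $(g,h)\in S\cdot S$ at a given height $h\in H$ equals the union of: $G\setminus(N_1\cup N_2\cup N_3)$ whenever $h\in\Psi\Psi^{(-1)}$; $G\setminus N_i$ whenever $h\in(\Psi\cup\Psi^{(-1)})+\Lambda_i$ (with a mild modification on $N_3$ when $q$ is odd, via the coset $\{(x,-x):x\in\F_q^*\}$); all of $G$ whenever $h\in\Lambda_i+\Lambda_j$ for some $i\ne j$; the coset $N_i$ whenever $h\in 2\Lambda_i$; plus further $\Upsilon$-generated contributions. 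Since the identity $(1,1)\in G$ lies in $N_1\cap N_2\cap N_3$, covering it at height $h$ requires either $h\in\Lambda_i+\Lambda_j$ for some $i\ne j$, or $h\in 2\Lambda_k$ for some $k$, or an $\Upsilon$-hit of the form $(g_0,h_0)\cdot(N_k,h-h_0)$ with $g_0\in N_k$. I would therefore pick $\Psi$ and the $\Lambda_i$ so that the union $\bigcup_{i\ne j}(\Lambda_i+\Lambda_j)\cup\bigcup_k 2\Lambda_k$, augmented by the $\Psi$-shifts $\bigcup_i((\Psi\cup\Psi^{(-1)})+\Lambda_i)$, covers the $25$ heights in a pattern that leaves only $|\Upsilon|$ residual heights; in the odd case two of the six $\Upsilon$-elements are reserved to repair the defect in \eqref{eq:neo:DN3} on the coset $\{(x,-x):x\in\F_q^*\}$.

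The main obstacle is exhibiting such explicit $\Psi,\Lambda_1,\Lambda_2,\Lambda_3$ in $\cy{5}\times\cy{5}$ together with a symmetric $\Upsilon$ of the prescribed small size that jointly realise $G\times H\preceq S\cdot S$. This is delicate because the three exceptional subgroups of the neofield GDS intersect pairwise only at the identity of $G$, forcing very tight constraints on which indices $i$ must be ``active'' at each height $h$: a wasteful configuration inflates $|\Upsilon|$ and spoils the precise degree formula $d=8q-6$ or $8q-4$. Once the concrete configuration is in hand, the remaining verification of $G\times H\preceq S\cdot S$ and the bookkeeping of $|S|$ reduce to a direct calculation in the group ring, delivering the two cases of \eqref{eq:neo:25/64}.
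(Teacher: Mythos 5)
Your overall framework (the neofield GDS in $(\F_q^*,*)\times(\F_q^*,*)$, $H=\cy{5}\times\cy{5}$, one $\Psi$-height and three two-element $\Lambda_i$, giving about $8(q-1)$ generators and $25(q-1)^2$ vertices) is the same as the paper's, and your degree bookkeeping is consistent with the statement. But the proposal stops exactly where the proof has to start: you never exhibit $\Psi,\Lambda_1,\Lambda_2,\Lambda_3$ or the correction set $\Upsilon$, and you yourself flag their existence as ``the main obstacle.'' The entire content of the theorem is the verification that some explicit configuration achieves $G\times H\preceq S\cdot S$; asserting that only $|\Upsilon|=4$ (resp.\ $6$) residual defects remain is not derived from any covering analysis --- those numbers are chosen so that $\#S$ matches the target degree. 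Concretely, you must check that the identity of $G$ (which lies in $N_1\cap N_2\cap N_3$ and is missed by $D\cdot D^{(-1)}$ and by every $D\cdot N_i$) is covered at \emph{all} $25$ heights, and that at each of the twelve heights in $\pm\Psi+\Lambda_i$ the missing set of size $q-1$ (the subgroup $N_i$, or for $i=3$ and odd $q$ the coset $\{(x,-x):x\in\F_q^*\}$) gets repaired; since $\#(g_0D)=q-2<\#N_i$, only translates of the $N_i$ themselves can effect this repair, so the structure of $\Upsilon$ is heavily constrained, and you give no argument that a symmetric $\Upsilon$ of the advertised size exists and suffices.

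The paper sidesteps the $\Upsilon$-patching problem entirely: it enlarges $D$ inside $G$ to $\tilde D=D\cup\{(1,1)\}$ (also adjoining $(1,-1)$ when $q$ is odd), which by \eqref{eq:neo:DN12} and \eqref{eq:neo:DN3} forces $\tilde D\cdot N_i=\tilde D^{(-1)}\cdot N_i\succeq G$ for all three exceptional subgroups. With that modification, taking $\Psi=\{d\}$ and $\Lambda_i=\{a_i,-a_i\}$, the only thing left to verify is the purely $H$-level fact that $\{\pm d\pm a_i\}\cup\{\pm a_i\pm a_j: i\neq j\}$ exhausts $\cy{5}\times\cy{5}\setminus\{(0,0)\}$, the height $(0,0)$ being handled by $\tilde D\cdot\tilde D^{(-1)}$ together with the products $N_i\cdot N_i$ --- a finite check with explicit $d,a_1,a_2,a_3$. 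If you want to salvage your $\Upsilon$-based route, you must likewise write down concrete heights and a concrete symmetric $\Upsilon$ and carry out the finite verification of $G\times H\preceq S\cdot S$; as it stands, your argument is a plan for a proof rather than a proof.
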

\begin{proof}
	Let $D:=\{(x,1-x): x \in \F_q, x\neq 0,1 \}$ and $\tilde{D}$ be defined by
	\[\tilde{D} := \left\{
	  \begin{array}{ll}
	    D\cup \{(1,1)\}, & \hbox{$q$ is even;} \\
	    D \cup \{(1,1),(1,-1)\}, & \hbox{otherwise.}
	  \end{array}
	\right.\]
	The exceptional subgroups are $N_1:=\F_q^*\times \{1\}$, $N_2:=\{1\}\times\F_q^*$ and $N_3:=\{(x,x): x\in\F_q^* \}$.
	By \eqref{eq:neo:DN12} and \eqref{eq:neo:DN3}, we see that in the group ring $\C[G]$,
	\begin{equation}\label{eq:neo:tildeD}
		\tilde{D} \cdot N_i = \tilde{D}^{(-1)} \cdot N_i \succeq G,
	\end{equation}
	for $i=1,2,3$.
	
	Now we define subset $S\subseteq \tilde{G}:=G\times \cy{5}\times \cy{5} $ as an element in the group ring $\C[\tilde{G}]$ by
	\begin{align*}
		S := &(\tilde{D},d)+(\tilde{D}^{(-1)},-d)+ (N_1,a_1)+(N_1,-a_1)+\\
		&(N_2,a_2)+(N_2,-a_2)+(N_3,a_3)+(N_2,-a_3),
	\end{align*}
	where $a_1=(1,0)$, $a_2=(0,1)$, $a_3=(0,2)$ and $d=(1,0)$. Hence, in the language used before, $\Psi=\{d\}$ and $\Lambda_i=\{a_i\}$ for $1\le i \le 3$.
	
	First, it is not difficult to see that
	\[(G,0,0)\le (\tilde{D},d)\cdot (\tilde{D}^{(-1)},d) +\sum_{i=1}^3(N_i,a_i)\cdot(N_i,-a_i).\]
	Second, one verifies
	\[ \{ \pm d\pm a_i: i=1,2,3 \}\cup \{ \pm a_i \pm a_j: i\neq j \} = \cy{5}\times \cy{5}\setminus \{(0,0)\}. \]
	Together with \eqref{eq:neo:tildeD}, we know that $S\cdot S$ covers all the elements in $\tilde{G}$.
	As
	\[ \#S = \left\{
	  \begin{array}{ll}
	    8q-6, & \hbox{if $q$ is even;} \\
	    8q-4, & \hbox{otherwise.}
	  \end{array}
	\right. \]
	which equals the valency $d$ of the graph, we have
	\[q-1=\left\{\def\arraystretch{2}
	  \begin{array}{ll}
	    \dfrac{d-2}{8}, & \hbox{if $q$ is even;} \\
	    \dfrac{d-4}{8}, & \hbox{otherwise.}
	  \end{array}
	\right.\]
	Together with the fact that the order of the graph equals $25(q-1)^2$, we get \eqref{eq:neo:25/64}.
\end{proof}
\begin{remark}
	It is natural to ask whether we can improve Theorem \ref{th:25/64} by choosing suitable subsets $\Psi$ and $\Lambda_i$ in $H$ which is not isomorphic to $\cy{5}\times\cy{5}$. We made an exhaustive computer search up to $\#H=50$, and there is no better result.
\end{remark}

\begin{corollary}
	For sufficiently large degree $d$,
	\[AC(d,2)\ge \frac{25}{64}d^2-2.1\cdot d^{1.525}.\]
\end{corollary}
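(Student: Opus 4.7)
The plan is to interpolate Theorem~\ref{th:25/64}, which produces graphs only at the sparse degrees $d=8q-4$ with $q$ an odd prime power, to all sufficiently large integers $d$ by the standard ``prime gap plus padding'' device used in \cite{siran_large_2011} to upgrade \eqref{eq:old_bound_AC}.

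First I would invoke the Baker--Harman--Pintz prime gap theorem \cite{baker_difference_2001}: for $d$ large enough, the interval
$$\left[\,\frac{d+4}{8} - \left(\frac{d+4}{8}\right)^{0.525},\ \frac{d+4}{8}\,\right]$$
contains a (necessarily odd) prime $q$. By construction $8q-4\le d$ and
$$q \;\ge\; \frac{d+4}{8} - \left(\frac{d+4}{8}\right)^{0.525}.$$
Theorem~\ref{th:25/64} then supplies an abelian Cayley graph $\Gamma(\tilde G,S)$ of diameter $2$, valency $8q-4$, and order $25(q-1)^2$, where $\tilde G=\F_q^*\times\F_q^*\times\cy{5}\times\cy{5}$. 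I would next enlarge $S$ to a symmetric subset $S'\subseteq \tilde G\setminus\{e\}$ of size exactly $d$: since $q$ is odd, $\tilde G$ has even order and hence contains involutions, so every cardinality between $8q-4$ and $\#\tilde G-1$ is realizable (use one involution to correct the parity when $d-(8q-4)$ is odd, and complete with inverse-closed pairs). Adding generators to a Cayley graph cannot increase its diameter, so $\Gamma(\tilde G,S')$ still has diameter at most $2$, still has $25(q-1)^2$ vertices, and now has valency exactly $d$; hence $AC(d,2)\ge 25(q-1)^2$.

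The remaining step is a routine expansion. From $q-1\ge \tfrac{d-4}{8} - \bigl(\tfrac{d+4}{8}\bigr)^{0.525}$ one obtains
$$25(q-1)^2 \;\ge\; \frac{25}{64}(d-4)^2 \;-\; \frac{50(d-4)}{8}\left(\frac{d+4}{8}\right)^{0.525} \;+\; O(d^{1.05}),$$
and the cross term is asymptotic to $\frac{50}{8^{1.525}}\,d^{1.525}$. Since $50\cdot 8^{-1.525}<2.1$, and since the $O(d)$ contribution from expanding $(d-4)^2$ together with the $O(d^{1.05})$ error is easily absorbed into the gap between $\frac{50}{8^{1.525}}\,d^{1.525}$ and $2.1\,d^{1.525}$ for $d$ large, we arrive at $AC(d,2)\ge \tfrac{25}{64}d^2 - 2.1\,d^{1.525}$. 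The only delicate point is the constant $2.1$: one must verify the numerical inequality $50/8^{1.525}<2.1$ and quantify the implicit threshold on $d$, but neither step presents a real obstacle.
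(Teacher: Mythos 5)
Your proposal is correct and follows essentially the same route as the paper: take the odd-$q$ case of Theorem~\ref{th:25/64}, use the Baker--Harman--Pintz theorem to find a prime $q$ with $8q-4\le d$ and $q\ge d/8-O(d^{0.525})$, pad the generating set symmetrically to reach valency exactly $d$ without increasing the diameter, and conclude via the numerical estimate $50/8^{1.525}<2.1$. Your treatment is in fact slightly more explicit than the paper's on the parity of the padding (using an involution of the even-order group) and on verifying the constant.
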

\begin{proof}
	Let $S$ be the defining set of the Cayley graph in Theorem \ref{th:25/64} and let $q$ be odd. That means $\#S=8q-4$ and the graph has $25(q-1)^2$ vertices.
	
	For any integer $d\in [8q-4,25(q-1)^2-1]$, we can choose and add $(d-\#S)$ elements in $G$ into $S$ to get a new set $\tilde{S}$ such that $\#\tilde{S}=d$ and $\tilde{S}=\tilde{S}^{-1}$. Clearly the Cayley graph $\Gamma(G,\tilde{S})$ is still of diameter $2$.
	
	Now we fix $d$, which is sufficiently large. Let $b:=d/8+1/2$. By \cite{baker_difference_2001}, there is a prime $q$ such that $b-b^{0.525}\le q\le b$. Hence, we can take this $q$ and construct the Cayley graph $\Gamma(G,\tilde{S})$ such that $\#\tilde{S}=d$, and
	\begin{align*}
		\#G=25(q-1)^2 &= 25(b-b^{0.525}-1)^2\\
					  &> 25(b^2-2b^{1.525})\\
					  &> 25\left(\frac{d^2}{64}-2\left(\frac{d}{8}\right)^{1.525}\right)\\
					  &> \frac{25}{64}d^2-2.1\cdot d^{1.525}. \qedhere
	\end{align*}
\end{proof}
\subsection{Construction \RN{2}}	
The second construction is based on a special property of certain $(q,q,q,1)$-relative difference sets, where $q$ is even. This construction further improves the lower bound for $AC(d,2)$ in Theorem \ref{th:25/64}.
\begin{lemma}\label{le:ovalcover}
	Let $m$ be an odd positive integer and $q=2^m$. We represent $\cy{4}^m$ as $\F_{2^m}\times \F_{2^m}$ with the group operation
	\[(a,b)* (c,d)= (a+c,b+d+a\cdot c).\]
	Then the set $D=\{(x,0): x \in \F_{2^m}\}$ is a $(q,q,q,1)$-relative difference set in $\cy{4}^m$ with the forbidden subgroup $N=(\{0\}\times \F_{2^m}, *)$. Moreover, in the group ring $\C[\cy{4}^m]$, we have
	\[D\cdot D + D^{(-1)}\cdot D^{(-1)} = 2G.\]
\end{lemma}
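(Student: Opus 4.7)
The plan is to verify both assertions by direct computation in $(\F_q\times\F_q,*)$, exploiting characteristic-two identities and the oddness of $m$ via the Artin--Schreier map.

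For the RDS claim I first compute inverses: from $(x,0)*(y,z)=(0,0)$ one reads off $y=x$ and $z=x^2$, so $D^{(-1)}=\{(x,x^2):x\in\F_q\}$. Then for $d_1=(x,0)$ and $d_2=(y,0)$ a short calculation gives
$$d_1 * d_2^{-1} = (x,0)*(y, y^2) = (x+y,\, y^2 + xy) = (x+y,\, y(x+y)).$$
Writing $z:=x+y$, for each fixed $z\ne 0$ the map $y\mapsto yz$ is a bijection on $\F_q$, so every $(z,a)\in G\setminus N$ with $z\ne 0$ arises exactly once as a difference, while no nontrivial element of $N$ ever appears. This is exactly the $(q,q,q,1)$-RDS property relative to $N$.

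For the group-ring identity I expand
$$D\cdot D=\sum_{x,y\in\F_q}(x+y,\,xy),\qquad D^{(-1)}\cdot D^{(-1)}=\sum_{x,y\in\F_q}\bigl(x+y,\,(x+y)^2+xy\bigr),$$
where the second identity uses $(x+y)^2=x^2+y^2$ in characteristic two. Re-parameterizing by $z=x+y$ and $y$, the second coordinates become $y^2+yz$ and $z^2+y^2+yz$ respectively. When $z=0$ we have $x=y$, and since $y\mapsto y^2$ is a bijection on $\F_q$ each sum contributes the subgroup $N$ with coefficient one, giving $2N$ in total.

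For $z\ne 0$ the substitution $t=y/z$ rewrites the second coordinates as $z^2(t^2+t)$ and $z^2\bigl(1+t^2+t\bigr)$. The additive map $t\mapsto t^2+t$ is $2$-to-$1$ with image $\ker(\Tr_{\F_q/\F_2})$, an additive subgroup of index two. The oddness of $m$ enters decisively here: since $\Tr(1)=m\bmod 2=1$ we have $1\notin\ker\Tr$, so the two image sets $z^2\ker\Tr$ and $z^2(1+\ker\Tr)$ are the two complementary additive cosets of $z^2\ker\Tr$ in $\F_q$. Hence for each $z\ne 0$ the combined contribution of the two sums is $2\sum_{w\in\F_q}(z,w)$, and altogether
$$D\cdot D+D^{(-1)}\cdot D^{(-1)}=2N+2(G-N)=2G.$$
The \emph{main obstacle} is precisely this trace step: without $m$ odd, $\Tr(1)$ would vanish, the two cosets would coincide instead of complementing one another, and the identity would fail. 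Everything else is routine bookkeeping in $\C[\cy{4}^m]$.
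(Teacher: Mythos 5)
Your proof is correct and follows essentially the same route as the paper: compute $D^{(-1)}=\{(x,x^2)\}$, verify the RDS property from $(x,0)*(y,y^2)=(x+y,y(x+y))$, and show $D\cdot D+D^{(-1)}\cdot D^{(-1)}=2G$ by noting that for each nonzero first coordinate the two second-coordinate image sets are complementary affine subgroups of index two. Your trace/Artin--Schreier formulation ($\Tr(1)=1$ for $m$ odd) is just the explicit justification of the paper's statement that $x^2+ax+a^2=0$ has no root in $\F_{2^m}$, so the two arguments coincide.
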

\begin{proof}
	The proof that $D$ is a $(q,q,q,1)$-relative difference set can be found in \cite{pott_semifields_2014,schmidt_planar_2014,zhou_2^n2^n2^n1-relative_2012}. Actually $D$ corresponds to the trivial planar function $f(x)=0$ defined over $\F_{2^m}$ which gives rise to the Desarguesian plane of order $2^m$. For the readers' convenience, we repeat the proof.
	
	Noting that $(a,b)^{-1}=(a,b+a^2)$, we have
	\begin{align*}
		D\cdot D^{(-1)} & = \sum_{x\in\F_{2^m}} (x,0)\cdot \sum_{y\in\F_{2^m}} (y,y^2)\\	
						& = \sum_{x,y\in\F_{2^m}}(x+y, y^2+xy)\\
						& = \sum_{a\in\F_{2^m}}\sum_{y\in\F_{2^m}}(a, ay)\\
						& = q \cdot (0,0) + G-N,
	\end{align*}
	from which it follows that $D$ is a $(q,q,q,1)$-relative difference set and the forbidden subgroup is $N$. Furthermore,
	\begin{equation}\label{eq:DD}
		D\cdot D =  \sum_{x,y\in\F_{2^m}} (x+y,xy)= \sum_{a\in\F_{2^m}}(a, x^2+ax),
	\end{equation}
	and
	\begin{equation}\label{eq:D-1D-1}
		D^{(-1)}\cdot D^{(-1)} = \sum_{x,y\in\F_{2^m}} (x+y,x^2+xy+y^2) = \sum_{a\in\F_{2^m}}(a, x^2+ax+a^2).
	\end{equation}
	Let $L_a:\F_{2^m} \rightarrow \F_{2^m}$ be the mapping defined by $L_a(x)=x^2+ax$, where $a\in \F_{2^m}^*$. Clearly $L_a$ is additive and $L_a(x)=0$ has exactly two roots. It implies that the image sets of $L_a$ and of the mapping $x\mapsto L_a(x) + a^2$ are both of size $2^{m-1}$. Furthermore, these two image sets have no common element, because $m$ is odd and $x^2+ax+a^2=0$ has no root in $\F_{2^m}$. Hence
	\[\sum_{x\in\F_{2^m}}(a, x^2 + ax) + \sum_{x\in\F_{2^m}}(a, x^2 + ax+a^2)
	= 2 \sum_{x\in \F_{2^m}}(a,x).\]
	It implies that $D\cdot D + D^{(-1)}\cdot D^{(-1)} = 2G$.
\end{proof}

\begin{theorem}\label{th:new_AC_bound}
	Let $d=3q$, where $q=2^m$ with $odd$ m. Then
	\[AC(d,2) \ge \frac{4}{9}d^2.\]
\end{theorem}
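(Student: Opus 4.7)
The plan is to build an abelian Cayley graph on $\tilde G := G \times \cy{4}$, where $G = (\F_{2^m}\times\F_{2^m},*)$ is the abelian group of order $q^2$ from Lemma~\ref{le:ovalcover}; this gives $|\tilde G|=4q^2=\tfrac{4}{9}d^2$. With $D=\{(x,0):x\in\F_{2^m}\}$ the relative difference set of Lemma~\ref{le:ovalcover} and $N=\{0\}\times\F_{2^m}$ its forbidden subgroup, I would take
\[S:=(D,1)\cup(D^{(-1)},3)\cup(N,2)\subseteq\tilde G.\]
Since $(a,b)^{-1}=(a,b+a^2)$ sends $D$ onto $D^{(-1)}$ and fixes $N$ setwise, and since $-1\equiv 3$ and $-2\equiv 2\pmod 4$, we get $S=S^{(-1)}$; the three pieces lie in distinct cosets of $G\times\{0\}$, so they are disjoint, $|S|=3q=d$, and $e\notin S$. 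Hence $\Gamma(\tilde G,S)$ is a genuine Cayley graph on an abelian group of valency $d$.

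The core of the proof is to verify $\tilde G\preceq S\cdot S$ in $\Z[\tilde G]$, which by Proposition~\ref{prop:cayley} forces diameter at most two. I would expand $S\cdot S$ and sort the summands by their $\cy{4}$-coordinate. Over $0$ one gets $D\cdot D^{(-1)}+D^{(-1)}\cdot D+N\cdot N = 2(q\cdot e+G-N)+qN$, in which every element of $G$ has coefficient at least $2$. Over $1$ one gets $D^{(-1)}\cdot N+N\cdot D^{(-1)}$ and over $3$ one gets $D\cdot N+N\cdot D$; a direct computation in $G$ (using the fact that for fixed $x$ the map $y\mapsto x^2+y$ is a bijection of $\F_{2^m}$) shows $D\cdot N=N\cdot D=D^{(-1)}\cdot N=N\cdot D^{(-1)}=G$, so each of these fibers equals $2G$. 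Finally, over $2$ one gets precisely $D\cdot D+D^{(-1)}\cdot D^{(-1)}$, which by Lemma~\ref{le:ovalcover} equals $2G$. Thus every element of $\tilde G$ appears in $S\cdot S$, yielding the diameter bound and hence $AC(d,2)\ge 4q^2=\tfrac{4}{9}d^2$.

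The main obstacle—and the point that forces the precise choice of $\tilde G$ and $S$—is covering the fiber over $\cy{4}$-coordinate $2$: neither $D\cdot D$ nor $D^{(-1)}\cdot D^{(-1)}$ alone covers more than half of $G$, so the identity $D\cdot D+D^{(-1)}\cdot D^{(-1)}=2G$ of Lemma~\ref{le:ovalcover} (which is where the parity hypothesis on $m$ enters, via the irreducibility of $x^2+x+1$ over $\F_{2^m}$) is precisely what saves the construction. Placing $D$ and $D^{(-1)}$ at $1$ and $-1$ in $\cy{4}$ is engineered so that both squares land on the coordinate $2$ and the lemma can be applied; this also explains why $\cy{4}$ is preferable to $\cy{2}\times\cy{2}$, in which $D$ and $D^{(-1)}$ would collapse to the same fiber and the crucial \emph{squares} term would be replaced by the much weaker $D\cdot D^{(-1)}$.
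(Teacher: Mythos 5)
Your proposal is correct and follows essentially the same route as the paper: the same group $G\times\cy{4}=\cy{4}^{m+1}$, the same generating set built from $(D,\pm1)$ and a translate of $N$, and the same use of Lemma~\ref{le:ovalcover} to cover the fiber over $2$ via $D\cdot D+D^{(-1)}\cdot D^{(-1)}=2G$. The only (harmless, in fact slightly tidier) deviation is placing $N$ at coordinate $2$ rather than $0$, which permutes which fibers are covered by the $D\cdot N$ and $D\cdot D^{(-1)}$ terms and additionally keeps the identity out of $S$.
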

\begin{proof}
	Let $D$ be the $(q,q,q,1)$-relative difference set defined in $\cy{4}^m$ as in Lemma \ref{le:ovalcover}. We denote the forbidden subgroup by $N$. The subset $S\subseteq G\times \cy{4}=\cy{4}^{m+1}$ is defined as an element in the group ring as
	\[S:=(D,1) + (D^{(-1)},-1) + (N,0) \in \C[\cy{4}^{m+1}]. \]
	Then
	\begin{align*}
		S\cdot S =& (D\cdot D^{(-1)}, 0) + q(N, 0) + (D\cdot D, 2) + (D^{(-1)}\cdot D^{(-1)},2)\\
				  & + (D \cdot N, 1) + (D^{(-1)} \cdot N, -1) \\
				 =& q(0,0)+(q-1)(N,0)+(G,\cy{4}\setminus\{2\})+(D\cdot D, 2) + (D^{(-1)}\cdot D^{(-1)},2)\\
				 =& q(0,0)+(q-1)(N,0)+(G,\cy{4}\setminus\{2\})+2(G, 2) \qquad (\text{Lemma \ref{le:ovalcover}})\\
				 =& q(0,0)+(q-1)(N,0)+(G, 2)+(G,\cy{4}).
	\end{align*}
	Hence $S\cdot S$ covers all the elements in $\cy{4}^{m+1}$. As the valency $d$ of $\Gamma(\cy{4}^{m+1},S)$ is $\#S=3q$, we have $q=d/3$ and the order of this graph is $4q^2=\frac{4}{9}d^2$.
\end{proof}

In the end, we consider the possibility to improve the lower bound for $AC(d,2)$ using the approach in Theorem \ref{th:new_AC_bound}. Let $H$ be an additively written abelian group and $\#H=l$. Let $G=\cy{4}^m$ and $D$ a $(2^m,2^m,2^m,1)$-relative difference set in $G$. Let $S\subseteq G\times H$ be defined as
\[ S := \sum_{g \in \Psi}(D,g)+\sum_{g \in \Psi}(D^{(-1)},-g) + \sum_{h\in \Lambda} (N, h).\]
Let $s:=2\psi+\theta=\# S$, where $\psi=\#\Psi$ and $\theta = \#\Lambda$. By counting the elements in $S\cdot S$, we see that the inequality
\[l\le 1+\psi (\psi-1) + \frac{\psi(\psi+1)}{2} +  2\psi \theta\]
must be satisfied.

Now let us consider the maximum value of 
\[\tau:=\frac{1+\psi (\psi-1) + \frac{\psi(\psi+1)}{2} +  2\psi \theta}{s^2}.\]
For given $s$, we have
\begin{align*}
	\tau & \le  \frac{1+\psi(\psi-1) +  \frac{\psi(\psi+1)}{2} +  2\psi(s-2\psi)}{s^2}\\
		 & =(-\frac{5}{2}\psi^2 + (2s-\frac{1}{2})\psi + 1) / s^2\\
		 & \le \left(\frac{1}{10}\left(2s-\frac{1}{2}\right)^2 + 1\right) /s^2,
\end{align*}
which is smaller than $4/9$ when $s\ge 4$. Therefore, it is impossible to improve the result in Theorem \ref{th:new_AC_bound} by using the same approach with any other groups $H$  of size larger than $4$.

\section*{acknowledgment}
The second author is supported by the Research Project of MIUR (Italian Office for University and Research) ``Strutture geometriche, Combinatoria e loro Applicazioni" 2012.

\end{document}